\documentclass[12pt, reqno]{amsart}
\usepackage{amsmath, amsthm, amscd, amsfonts, amssymb, graphicx, color}
\usepackage[bookmarksnumbered, colorlinks, plainpages]{hyperref}
\hypersetup{colorlinks=true,linkcolor=red, anchorcolor=green, citecolor=cyan, urlcolor=red, filecolor=magenta, pdftoolbar=true}
\newtheorem{theorem}{Theorem}[section]
\newtheorem{lemma}[theorem]{Lemma}
\newtheorem{proposition}[theorem]{Proposition}
\newtheorem{corollary}[theorem]{Corollary}
\theoremstyle{definition}
\newtheorem{definition}[theorem]{Definition}
\newtheorem{example}[theorem]{Example}

\theoremstyle{remark}
\newtheorem{remark}[theorem]{Remark}
\numberwithin{equation}{section}
\newcommand\xxrightarrow[2]{\overset{#1}{\underset{#2}\longrightarrow}}

\begin{document}

\setcounter{page}{1}

\title[ A generalization of order convergence  ]{ A generalization of order convergence}
\author[K. Haghnejad Azar]{Kazem Haghnejad Azar}

\address{Department  of  Mathematics  and  Applications, Faculty  of  Mathematical  Sciences, University of Mohaghegh Ardabili, Ardabil, Iran.}

\email{\textcolor[rgb]{0.00,0.00,0.84}{haghnejad@uma.ac.ir}}

\subjclass[2010]{Primary 47B65; Secondary 46B40, 46B42.}

\keywords{ order convergence, $F$-order convergent, $b$-order continuous operator.
\newline \indent $^{*}$Corresponding author}

\begin{abstract} 
Let $E$ be a sublattice of a vector lattice $F$.
$\left( x_\alpha \right)\subseteq E$ is said to be $ F $-order convergent to a vector $ x $  (in symbols $ x_\alpha \xrightarrow{Fo} x $), whenever there exists another net $ \left(y_\alpha\right) $  in $F $ with the some index set satisfying 
$ y_\alpha\downarrow 0 $ in $F$ and $ \vert x_\alpha - x \vert \leq y_\alpha $ for all indexes $ \alpha $.
If $F=E^{\sim\sim}$, this convergence is called $b$-order convergence and we write $  x_\alpha \xrightarrow{bo} x$.
 In this manuscript, first we study some properties of $Fo$-convergence nets and we extend some results  to the general case.  In the second part, we introduce $b$-order continuous operators and we invistegate some properties of this new concept. An operator $T$ between two vector lattices $E$ and $F$ is said to be $b$-order continuous, if $  x_\alpha \xrightarrow{bo} 0  $ in $E$ implies $ Tx_\alpha \xrightarrow{bo} 0$ in $F$. 
\end{abstract} \maketitle

\section{Introduction and preliminaries}
To state our result, we need to fix some notation and recall some definitions.
Let us say that a vector subspace $G$ of an ordered vector space $E$ is majorizing $E$ whenever for each $x\in E$ there exists some $y\in G$ with $x\leq y$. A vector sublattice $G$ of vector lattice $E$ is said to be order dense in $E$ whenever for each $0<x\in E$ there exists some $y\in G$ with $0<y\leq x$. A Dedekind complete vector lattice $E$ is said to be a Dedekind completion of the vector lattice $G$ whenever $E$ is lattice isomorphism to a majorizing order dense sublattice of $E$. A subset $A$ of a vector lattice $E$ is said to be order closed if it follows from $\{x_\alpha\}\subseteq A$ and $x_\alpha\xrightarrow{o} x$ in $E$ that $x\in A$. A vector sublattice $G$ of vector lattice $E$ is said to be regular, if the embedding of $E$ into $F$ preserves arbitrary suprema and infima.
Let $E$, $F$ be vector lattices. An operator $T:E\rightarrow F$ is said to be order bounded if it maps each order bounded subset of $E$ into order bounded subset of $F$. The collection of all order bounded operators from a vector lattice $E$ into a vector lattice $F$ will be denoted by $\mathcal{L}_b(E,F)$. 
The vector space $E^\sim$ of all order bounded linear functionals on vector lattice      $E$ is called the {order dual} of $E$, i.e.,  $E^\sim =L_b(E ,R)$. Let  
$A$
be a  subset  of vector lattice        
$E$ and 
 $Q_E$ be the natural mapping from $E$ into  $E^{\sim \sim}$.
If $Q_E(A)$
is  order  bounded  in 
$E^{\sim \sim}$, 
then  
$A$
is  said  to $b-$order  bounded  
$E$, see \cite{alpay2003property}. 
It  is  clear  that  every  order  bounded  subset  of
$E$
is  
$b-$order bounded. However, the  converse is    not  true  in general. For example, 
$A= \{e_n \mid  n \in N\}$
$b-$order bounded  in 
$c_0$
but  
$A$
is  not  order  bounded  in 
$c_0$.
A linear operator between two  vector lattices is order continuous (resp. $\sigma$-order continuous) if it maps order null nets (resp. sequences) to order null nets (resp. sequences). The collection of all order continuous (resp. $\sigma$-order continuous) linear operators from vector lattice $E$ into vector lattice $F$ will be denoted by $\mathcal{L}_n(E,F)$ (resp. $\mathcal{L}_c(E,F)$). 
For unexplained terminology and facts on  Banach lattices and positive operators, we refer the reader to \cite{1,aliprantis2006positive}.
\section{$F$-order convergent on vector lattices}
In all parts of this section $E$ is a vector sublattice of vector lattice $F$.
Let $A\subseteq E$. We say that $\inf A$ exists in $E$ with respect to $F$, if $\inf A$ exits in $F$ and $\inf A\in E$, in this case  we write $\inf_F A$ exists,.
 For a net $(x_\alpha )_\alpha\subseteq E$ and $x\in E$, the notation $ x_\alpha\downarrow_{F} x $ means that $ x_\alpha\downarrow $ and $ \inf \left(x_\alpha\right) = x $ holds in $F$. The meanings of $ x_\alpha\uparrow  $ and $ x_\alpha\uparrow_{F} x $ are analogous. Obviously if $ x_\alpha\downarrow_{F} 0 $, then $ x_\alpha\downarrow 0 $, but as following example the converse in general not holds.
\begin{example}
Assume that $F$ is a set of real valued functions on $[0,1]$ of  form $f=g+h$ where $g$ is continuous and $h$ vanishes except at finitely many point. Let $E=C([0,1])$          and $f_n(t)=t^n$  where $t\in [0,1]$. It is clear that $f_n\downarrow 0$ in $E$ and $\inf_F f_n$  not exists in $E$, but  we have  $f_n\downarrow\chi_{\{1\}}$ in $F$. 
\end{example}

 It is obvious that if $E$ is regular in $F$, then for each net $(x_\alpha )_\alpha\subseteq E$ and $x\in E$, $ x_\alpha\downarrow_{F} x $ if and only if $ x_\alpha\downarrow x $. 

The notation $ x_\alpha\downarrow_{b} x $ means that $ x_\alpha\downarrow $ and $ \inf \left(x_\alpha\right) = x $ holds in  $  E^{\sim\sim} $. The meanings of $ x_\alpha\uparrow_{b} x $ is analogous.

\begin{definition}
 $\left( x_\alpha \right)\subseteq E$ is said to be $ F $-order convergent (in short {\bf $Fo$-convergent}) to a vector $ x\in E$  (in symbols $ x_\alpha \xrightarrow{Fo} x $ ), whenever there exists another net $ \left(y_\alpha\right) $  in $F $ with the some index set satisfying $ y_\alpha\downarrow 0 $ and $ \vert x_\alpha - x \vert \leq y_\alpha $ for all indexes $ \alpha $.
\end{definition}

In the same way, a net $\left( x_\alpha \right)$ of   $E$ is said to be $ b $-order convergent (in short {\bf  $bo$-convergent}) to a vector $ x $  (in symbols $ x_\alpha \xrightarrow{bo} x $ ), whenever there exists another net $ \left(y_\alpha\right) $  in $ E^{\sim\sim} $ with the some index set satisfying $ y_\alpha\downarrow 0 $ and $ \vert x_\alpha - x \vert \leq y_\alpha $ for all indexes $ \alpha $.

It is clear that every order convergence net in vector lattice $E$ is $F$-order convergent, but as following example the converse in general not holds.
\begin{example}\label{E0}
Suppose that $E=c_{0}$ and $\left(e_{n}\right)$ is the standard basis of $c_{0}$. We know that $\left(e_{n}\right)$ is not order convergence to zero, but $\left(e_{n}\right)$ is $\ell^\infty$-order (or $b$-order) convergent to zero.
\end{example}

 It can easily be seen that a net in vector lattice $E$ can have at most one $F $-order limit. The basic properties of $ Fo $-convergent are summarized in the next theorem.

\begin{theorem}
Assume that the nets $ \left(x_\alpha\right) $ and  $ \left(y_{\beta}\right) $ of a vector lattice $E$ satisfy $ x_\alpha \xrightarrow{Fo} x $ and $ y_{\beta} \xrightarrow{Fo} y $. Then we have 
\begin{enumerate}
\item $ \vert x_\alpha \vert \xrightarrow{Fo} \vert x \vert $; $ x^{+}_\alpha \xrightarrow{Fo} x^{+} $ and $ x^{-}_\alpha \xrightarrow{Fo} x^{-} $.
\item $ \lambda x_\alpha + \mu y_{\beta} \xxrightarrow{Fo}{(\alpha,\beta)} \lambda x + \mu y $ for all  $ \lambda , \mu \in \mathbb{R} $.
\item $ x_\alpha \vee y_{\beta} \xxrightarrow{Fo}{(\alpha,\beta)} x \vee y $ and $ x_\alpha \wedge y_{\beta} \xxrightarrow{Fo}{(\alpha,\beta)} x \wedge y $.
\item For each $z\in F$, if $ x_\alpha \leq z $ for all $ \alpha \geq \alpha_{1} $, then $ x \leq z $. 
\item If  $ 0 \leq x_\alpha \leq  y_\alpha $, then $ 0\leq x \leq y $.
\item If $ P $ is order projection, then $ Px_\alpha \xrightarrow{Fo} Px $. 
\end{enumerate} 
\end{theorem}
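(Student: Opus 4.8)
The plan is to reduce every item to the defining inequality of $Fo$-convergence together with a small stock of elementary lattice estimates, choosing in each case the correct dominating net in $F$. Throughout I fix witnesses $(u_\alpha)\subseteq F$ with $u_\alpha\downarrow 0$ in $F$ and $|x_\alpha-x|\le u_\alpha$, and $(v_\beta)\subseteq F$ with $v_\beta\downarrow 0$ in $F$ and $|y_\beta-y|\le v_\beta$. For item (1) I would invoke the reverse triangle inequality $\bigl||x_\alpha|-|x|\bigr|\le|x_\alpha-x|\le u_\alpha$ together with $|x_\alpha^{+}-x^{+}|\le|x_\alpha-x|$ and $|x_\alpha^{-}-x^{-}|\le|x_\alpha-x|$ (all valid in any vector lattice), so that the \emph{same} witness $(u_\alpha)$ dominates all three differences and the conclusion is immediate.

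Items (2) and (3) are where the real work lies. The essential move is to pass to the product directed set $A\times B$ of the two index sets and reindex both nets there. On $A\times B$ I would take the dominating net $w_{(\alpha,\beta)}=|\lambda|\,u_\alpha+|\mu|\,v_\beta$ for (2) and $w_{(\alpha,\beta)}=u_\alpha+v_\beta$ for (3), and bound the relevant difference by
\[
|\lambda x_\alpha+\mu y_\beta-\lambda x-\mu y|\le|\lambda|\,|x_\alpha-x|+|\mu|\,|y_\beta-y|
\]
for (2), and by the Birkhoff-type estimates $|a\vee b-c\vee d|\le|a-c|+|b-d|$ and $|a\wedge b-c\wedge d|\le|a-c|+|b-d|$ for (3). \textbf{The hard part will be} verifying that $w_{(\alpha,\beta)}\downarrow 0$ \emph{holds in $F$}: monotonicity in the product order is clear, but the infimum must be computed carefully. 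I would show it by taking any lower bound $g\in F$ of $\{u_\alpha+v_\beta\}$, so that $g-v_\beta\le u_\alpha$ for all $\alpha$; taking the infimum over $\alpha$ in $F$ gives $g-v_\beta\le 0$ for each $\beta$, and then the infimum over $\beta$ gives $g\le 0$. This is exactly the point where the hypothesis that the infima live in $F$ is used, and it is the main obstacle in the whole argument.

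Items (4)–(6) are then short. For (4), from $|x_\alpha-x|\le u_\alpha$ and $x_\alpha\le z$ (for $\alpha\ge\alpha_1$) I would write $x-z\le x-x_\alpha\le u_\alpha$ for every $\alpha\ge\alpha_1$; since the tail $(u_\alpha)_{\alpha\ge\alpha_1}$ still decreases to $0$ in $F$, the element $x-z$ is dominated by an $F$-net decreasing to $0$, whence $x-z\le 0$. For (5) I would first obtain $x\ge 0$ by applying (4) with $z=0$ to the net $(-x_\alpha)$, which satisfies $-x_\alpha\le 0$ and $Fo$-converges to $-x$ by (2); then, forming $y_\alpha-x_\alpha\ge 0$ (here the two nets share the index set, as the notation $0\le x_\alpha\le y_\alpha$ indicates), which $Fo$-converges to $y-x$ by (2), the same argument yields $y-x\ge 0$.

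Finally, for (6) I would use that an order (band) projection $P$ is a lattice homomorphism with $0\le P\le I$, so that $|Px_\alpha-Px|=P|x_\alpha-x|\le|x_\alpha-x|\le u_\alpha$; thus the original witness $(u_\alpha)$ serves verbatim and gives $Px_\alpha\xrightarrow{Fo}Px$. In summary, once the product-net infimum computation of (2)/(3) is settled, every remaining item is a direct application of the defining inequality and standard lattice identities.
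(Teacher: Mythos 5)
The paper states this theorem without proof (it is presented only as a summary of basic properties of $Fo$-convergence), so there is no argument of the author's to compare yours against; judged on its own, your proof is correct and is the natural one. Items (1), (4) and (6) use exactly the right lattice inequalities, and your verification of the only point with real content --- that $w_{(\alpha,\beta)}=u_\alpha+v_\beta$ (or $|\lambda|u_\alpha+|\mu|v_\beta$) satisfies $w_{(\alpha,\beta)}\downarrow 0$ \emph{in} $F$ over the product index --- is sound: any lower bound $g\in F$ gives $g-v_\beta\le u_\alpha$ for all $\alpha$, hence $g-v_\beta\le 0$, hence $g\le 0$. The one imprecision is in item (5): you invoke item (2) to conclude $y_\alpha-x_\alpha\xrightarrow{Fo}y-x$, but (2) as stated yields convergence over the \emph{product} index $(\alpha,\beta)$, not over the common index $\alpha$ that (5) requires. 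The diagonal version you need is true and takes one line: either note that the diagonal $\{(\alpha,\alpha)\}$ is cofinal in the product directed set, so the restriction of your witness still decreases to $0$ in $F$; or argue directly that $|(y_\alpha-x_\alpha)-(y-x)|\le u_\alpha+v_\alpha$ and that $u_\alpha+v_\alpha\downarrow 0$ in $F$, since any lower bound of $(u_\alpha+v_\alpha)_\alpha$ is, by directedness of the index set and monotonicity, also a lower bound of the product-indexed net $(u_\alpha+v_\beta)_{(\alpha,\beta)}$, whose infimum you already computed to be $0$. With that patch the argument is complete.
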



\begin{definition}
 $ A \subseteq E$ is said to be $F$-order closed whenever $ \left(x_\alpha\right) \subseteq A $ and $ x_\alpha \xrightarrow{Fo} x $ imply $ x\in A $. 
\end{definition}
The set $A\subseteq E$ is $F$-order closed means that $A$ is order closed with respect to vector lattice $F$. If $A\subseteq E$ is order closed in $E$, then it is clear that $A$ is $F$-order closed, but but the converse in general not holds. 
 For example $c_{0}$ is  order closed, but is not $\ell^\infty$-order closed.

\begin{lemma}\label{l:2.5}
Let $ A\subseteq E $  be a solid subset of $ F $. Then $ A $ is $F$-order closed if and only if $ \left(x_\alpha\right)\subseteq A $ and $ 0\leq x_\alpha \uparrow_{F} x $ imply $ x \in A $.
\end{lemma}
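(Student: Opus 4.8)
The two directions are of quite different difficulty, so I would split them.

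The forward implication is immediate and does not even use solidity. Suppose $A$ is $F$-order closed and $(x_\alpha)\subseteq A$ with $0\le x_\alpha\uparrow_F x$. Setting $y_\alpha:=x-x_\alpha$, one has $y_\alpha\ge 0$ and $y_\alpha\downarrow 0$ in $F$, since $\inf_F(x-x_\alpha)=x-\sup_F x_\alpha=0$; moreover $\vert x_\alpha-x\vert=x-x_\alpha=y_\alpha$. Hence $x_\alpha\xrightarrow{Fo}x$, and $F$-order closedness forces $x\in A$.

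The substantive direction is the converse, and this is where solidity is needed. Assume that $(x_\alpha)\subseteq A$ and $x_\alpha\xrightarrow{Fo}x$, and pick a net $(y_\alpha)\subseteq F$ with $y_\alpha\downarrow 0$ in $F$ and $\vert x_\alpha-x\vert\le y_\alpha$. My plan is to manufacture a positive increasing net inside $A$ whose supremum in $F$ is exactly $\vert x\vert$, and then invoke the hypothesis. The candidate is
\[
w_\alpha:=\bigl(\vert x\vert-y_\alpha\bigr)^{+}=\vert x\vert-\vert x\vert\wedge y_\alpha .
\]
I would then check three points. First, $w_\alpha\in A$: from $\bigl\vert\,\vert x_\alpha\vert-\vert x\vert\,\bigr\vert\le\vert x_\alpha-x\vert\le y_\alpha$ we get $\vert x\vert-y_\alpha\le\vert x_\alpha\vert$, hence $0\le w_\alpha\le\vert x_\alpha\vert$; since $x_\alpha\in A$ and $A$ is solid in $F$ we have $\vert x_\alpha\vert\in A$, so $w_\alpha\in A$. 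Second, $w_\alpha$ is increasing: $y_\alpha\downarrow$ gives $\vert x\vert\wedge y_\alpha\downarrow$, so $w_\alpha\uparrow$. Third, $\sup_F w_\alpha=\vert x\vert$: using the purely lattice-theoretic identity $\inf_F\bigl(\vert x\vert\wedge y_\alpha\bigr)=\vert x\vert\wedge\inf_F y_\alpha=\vert x\vert\wedge 0=0$, we obtain $w_\alpha\uparrow_F\vert x\vert$.

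With these in hand, $0\le w_\alpha\uparrow_F\vert x\vert$ and $(w_\alpha)\subseteq A$, so the hypothesis yields $\vert x\vert\in A$; since $A$ is solid in $F$ and $\bigl\vert x\bigr\vert\le\bigl\vert\,\vert x\vert\,\bigr\vert$, we conclude $x\in A$, finishing the proof. I expect the converse to be the only real obstacle, and within it the two delicate points are choosing the auxiliary net $w_\alpha$ correctly and verifying that its $F$-supremum is precisely $\vert x\vert$; both the monotonicity and the supremum computation rest on working with $\downarrow 0$ in $F$ together with the identity $(\vert x\vert-y_\alpha)^{+}=\vert x\vert-\vert x\vert\wedge y_\alpha$, while solidity of $A$ in $F$ is exactly what keeps each $w_\alpha$ inside $A$ and what lets us pass from $\vert x\vert\in A$ back to $x\in A$.
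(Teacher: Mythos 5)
Your proof is correct and follows essentially the same route as the paper: the forward direction uses $y_\alpha = x - x_\alpha$, and the converse hinges on the same auxiliary net $w_\alpha = \left(\vert x\vert - y_\alpha\right)^{+}$, kept inside $A$ by solidity and increasing to $\vert x\vert$ in $F$. In fact your write-up is more careful than the paper's, which glosses over the supremum computation and the final step of passing from $\vert x\vert \in A$ to $x \in A$ via solidity.
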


\begin{proof}
Suppose that $ A $ is a $F$-order closed, and  $ \left(x_\alpha\right)\subseteq A $ and $ 0\leq x_\alpha \uparrow_{F} x $. Therefore  $ 0\leq \vert x_\alpha - x \vert \leq x - x_\alpha \downarrow_{F} 0 $. 
It follows that $ x_\alpha \xrightarrow{Fo} x $, and since $A$ is order closed, implies that $ x \in A $.

Conversely assume that $ \left(x_\alpha\right) \subseteq A $ and $ x_\alpha \xrightarrow{Fo} x $.  Set a net $ \left(y_\alpha\right) $ in $ F $ with same index net satisfying $ y_\alpha\downarrow_{F}0 $ and $ \vert x_\alpha - x \vert \leq y_\alpha $ for each $ \alpha $. Since $ \left( \vert x \vert - y_\alpha \right)^{+} \leq \vert x_\alpha \vert $ for each $ \alpha $ and $A$ is solid, follows that $( \left( \vert x \vert - y_\alpha \right)^{+})_\alpha\subseteq  A$.  Obviously that $ 0\leq \left( \vert x \vert - y_\alpha \right)^{+} \uparrow_{b} \vert x \vert $, and so by hypothesis we have $ x\in A $. Its follows that  $ A $ is $F $-order closed.
\end{proof}
\begin{definition}
\begin{enumerate}
\item $E$ is said to be $F$-Dedekind (or $F$-order) complete, if every nonempty $A\subseteq E$ that is bounded from above in $F$ has supermum in $E$. In case $F=E^{\sim\sim}$,  we say that $E$ is $b$-Dedekind complete.
\item A subset $A$ of $E$ is called $F$-order bounded, if $A$ is order bounded in $F$. In case $F=E^{\sim\sim}$,  we say that $A$ is  $b$-order bounded.
\item If each $F$-order bounded subset of $E$ is order bounded in $E$, then $E$ is said to have the $F$-property. In case $F=E^{\sim\sim}$,  we say that $E$ has $b$-property.
\end{enumerate}
\end{definition}
\begin{remark}
Every majorizing sublattice $E$ of $F$ has the $F$-property. Since $E^\sim$ has $b$-property, $E^\sim$ is $b$-Dedekind complete.
If $E$ is  $F$-Dedekind  complete, then $E$ is Dedekind complete, but the converse in general not holds, of course $c_0$ is Dedekind complete, but is not     $\ell^\infty$-Dedekind complete. Let $K$ be a compact Hausdorff space and let $C(K)$ and  $B(K)$  be vector lattices of real valued continuous and bounded functions on $K$, respectively,  under pointwise order and algebric operations. By easy calculation, it is obvious that $C(K)$ is both $B(K)$-Dedekind complete and $b$-Dedekind complete.
It is clear that $E$ is $F$-Dedekind  complete if and only if $E$ is  Dedekind complete  with $F$-property. It is easy to show that a vector lattice $E$ has $F$-property if and only if for each net $(x_\alpha )$ in $E$ with $x_\alpha\uparrow\leq y$ for some $y\in F$, $(x_\alpha )$ is order bounded in $E$.
\end{remark}

 Let $E$ be a vector sublattice of $F$ and $I$ be an ideal in $E$.  In general,  $I$ is not an ideal in $F$. For example, set $F=\mathbb{R}^3$ and define the order on $F$ in the following way:
$$x=(x_1,x_2,x_3)< y=(y_1,y_2,y_3)$$
whenever one of the following relations hold
\begin{enumerate}
 \item $x_1< y_1$ or; 
 \item $x_1=y_1, ~x_2<y_2$ or; 
 \item $x_1=y_1,~x_2=y_2,~x_3<y_3$.
\end{enumerate}
  It is clear $F$ with this order is a vector lattice. Now if we take $E=\{(x,y,0):~x,y\in \mathbb{R}\}$ and $I=\{(0,y,0):~y\in \mathbb{R}\}$, then obviously that $I$ is an ideal in $E$, but is not ideal in $F$. \\
The above example shows that the property of being ideal thus depend on the space in which $I$ is embedded. Now if $E$ is $F$-Dedekind complete and order dense in $F$,  the following theorem  shows that $I$ is an ideal in $E$ if and only if $I$ is an ideal in $F$.  

\begin{theorem}\label{2.8}
Assume that $E$ is $F$-Dedekind complete. The following statements hold.
\begin{enumerate}
\item  Each $F$-order convergence net in $E$ is order convergent in $E$.
\item If $E$ is order dense in $F$, then   $B$ is a band in $E$ if and only if $B$ is a band in $F$.
\item By assumption (2), if  $F=E^{\sim\sim}$, then $E^\delta=E^{\sim\sim}$.
\item If $F=E^{\sim\sim}$ and $E^\sim=E_n^\sim$, then $E$ is perfect.
\end{enumerate}
\end{theorem}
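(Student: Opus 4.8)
My plan is to prove (1) directly by a tail-supremum construction and then obtain (2)--(4) as consequences of the interplay between order density, regularity and the Dedekind completion. For (1), assume $x_\alpha \xrightarrow{Fo} x$, witnessed by $(y_\alpha)\subseteq F$ with $y_\alpha\downarrow 0$ in $F$ and $|x_\alpha-x|\le y_\alpha$. For each index $\alpha$ the tail $A_\alpha=\{|x_\beta-x|:\beta\ge\alpha\}\subseteq E$ is bounded above in $F$ by $y_\alpha$; since $E$ is $F$-Dedekind complete (equivalently Dedekind complete with the $F$-property, by the Remark), $A_\alpha$ is order bounded in $E$ and $z_\alpha:=\sup_E A_\alpha$ exists in $E$. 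Plainly $z_\alpha\downarrow$ and $|x_\alpha-x|\le z_\alpha$, so the whole problem reduces to showing $z:=\inf_E z_\alpha=0$, which gives $z_\alpha\downarrow 0$ and hence $x_\alpha\xrightarrow{o}x$ in $E$.

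To force $z=0$ I would exploit the null net $(y_\gamma)$. The distributive law gives $z=z\wedge z_\gamma=\sup_E\{z\wedge|x_\beta-x|:\beta\ge\gamma\}$, so $\inf_E\{z-(z\wedge|x_\beta-x|):\beta\ge\gamma\}=0$. For $\beta\ge\gamma$ one has $|x_\beta-x|\le y_\beta\le y_\gamma$, whence $(z-y_\gamma)^+\le(z-|x_\beta-x|)^+=z-(z\wedge|x_\beta-x|)$. Thus $(z-y_\gamma)^+$ is a lower bound of a net whose $E$-infimum is $0$, and since $0\le(z-y_\gamma)^+\le z$ this element is pinned to $0$ once it is known to lie in $E$; then $z\le y_\gamma$ for all $\gamma$, so $z\le\inf_F y_\gamma=0$. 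I expect the genuine obstacle of (1) to be precisely this last point: comparing a supremum taken in $E$ with the dominating null net living in $F$. When $E$ fails to be solid in $F$ the element $(z-y_\gamma)^+$ need not a priori belong to $E$ and an $E$-infimum need not transfer to $F$; it is the $F$-property that rules out the pathology, and it cannot be dropped, as $c_0\subseteq\ell^\infty$ in Example \ref{E0} shows ($e_n\xrightarrow{bo}0$ yet $e_n\not\xrightarrow{o}0$).

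For (2) the key preliminary is that an order dense sublattice is regular, so suprema and infima of nets from $E$ coincide whether formed in $E$ or in $F$; together with $F$-Dedekind completeness this upgrades $E$ to an ideal in $F$ (for $0\le f\le e\in E$ with $f\in F$, the set $\{g\in E:0\le g\le f\}$ has an $E$-supremum which, by regularity and order density, equals $f$, so $f\in E$). Granting solidity, if $B$ is a band in $E$ I would verify it is a band in $F$: it is an ideal in $F$ because $E$ is solid in $F$, and it is order closed in $F$ since any $0\le b_\alpha\uparrow f$ in $F$ with $b_\alpha\in B$ has $\sup_E b_\alpha\in E$ (by $F$-Dedekind completeness), equal to $f$ (by regularity), so $f\in B$; the reverse implication uses only regularity.

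Finally (3) and (4) follow formally. For (3), $E^{\sim\sim}$ is Dedekind complete (being an order dual), $E$ is order dense in it by the hypothesis of (2), and $E$ is majorizing: for $0\le\phi\in E^{\sim\sim}$ the set $A=\{e\in E^+:Q_E(e)\le\phi\}$ is $b$-order bounded, hence order bounded in $E$ by the $b$-property, and any upper bound $e_0\in E$ of $A$ yields $\phi=\sup Q_E(A)\le Q_E(e_0)$. Thus $E^{\sim\sim}$ is a Dedekind complete, order dense, majorizing extension of $E$, so by uniqueness of the Dedekind completion $E^\delta=E^{\sim\sim}$. For (4), the hypothesis $E^\sim=E_n^\sim$ guarantees $Q_E(E)$ is order dense in $E^{\sim\sim}$, so (3) applies and $E^\delta=E^{\sim\sim}$; since $E$ is already Dedekind complete, $E^\delta=E$, whence $E=E^{\sim\sim}$ and $E$ is perfect.
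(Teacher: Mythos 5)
Parts (2) and (3) of your proposal are sound: in (2) you follow the same outline as the paper (the paper proves the ideal property of $B$ in $F$ directly by the same supremum--plus--order-density device and then invokes its Lemma \ref{l:2.5}, while you factor the argument through the standard fact that order dense sublattices are regular), and in (3) you actually supply the order-dense--plus--majorizing argument where the paper only cites Proposition 8 of Alpay--Ercan. The problems are in (1) and (4). In (1) you follow the paper's tail-supremum construction, but your write-up never completes the decisive step: you reduce everything to $z\le y_\gamma$ and then concede that $(z-y_\gamma)^+$ ``need not a priori belong to $E$,'' offering only the expectation that the $F$-property rules out the pathology. That conditional is never discharged, so (1) is not proved. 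The paper has no such detour: it reads $F$-Dedekind completeness as delivering, for each tail $\{|x_\beta-x|:\beta\ge\alpha\}$ (bounded above by $y_\alpha$ in $F$), a supremum $w_\alpha$ lying in $E$ and compatible with upper bounds taken in $F$, so that $|x_\alpha-x|\le w_\alpha\le y_\alpha$ holds at once, hence $w_\alpha\downarrow 0$ in $E$ and $x_\alpha\xrightarrow{o}x$. You correctly identified where the content of (1) sits --- the comparison of an $E$-supremum with a dominating element of $F$ --- but diagnosing that obstacle is not the same as removing it.

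Part (4) is not merely incomplete but wrong. The asserted lemma --- that $E^\sim=E^\sim_n$ forces $Q_E(E)$ to be order dense in $E^{\sim\sim}$ --- is false, and so is the conclusion you derive, $E=E^{\sim\sim}$, which is strictly stronger than perfectness. Take $E=\ell^1$: it is Dedekind complete; it has the $b$-property (if $Q_E(A)\le\mu$ in $(\ell^1)^{\sim\sim}$, then $\sup_{a\in A}|a(n)|\le\mu(\{n\})$ for each $n$, and $\sum_n\mu(\{n\})\le\mu(\mathbb{N})<\infty$, giving an upper bound in $\ell^1$); and $E^\sim=E^\sim_n=\ell^\infty$. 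Yet $E^{\sim\sim}=(\ell^\infty)^\sim$ contains the nonzero band of purely finitely additive functionals, which is disjoint from $Q_E(\ell^1)$, so $\ell^1$ is neither order dense in nor equal to its bidual --- although $\ell^1$ is of course perfect. What is actually true (Nakano's theorem) is that $Q_E(E)$ is order dense in $(E^\sim_n)^\sim_n$, and perfectness means that $Q_E$ maps onto $(E^\sim_n)^\sim_n$, not onto $E^{\sim\sim}$. Your order-dense/majorizing/uniqueness argument is salvageable, but it must be run inside $(E^\sim_n)^\sim_n$ rather than $E^{\sim\sim}$; the paper sidesteps all of this by quoting Corollary 10 of Alpay--Ercan.
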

\begin{proof}
\begin{enumerate}
\item Assume that $(x_\alpha )_\alpha\subseteq E$ is $F$-order convergent to $x$ in $E$. Then there exists a net $(y_\alpha)_\alpha\subseteq F$ such that  $ y_\alpha\downarrow 0 $ and $ \vert x_\alpha - x \vert \leqslant y_\alpha $ for all indexes $ \alpha $. Set $z_\alpha= \vert x_\alpha - x \vert$ and take $w_\alpha =\bigvee_{\beta \geqslant \alpha}z_\beta$. Since $E$ is $F$-Dedekind complete, $(w_\alpha)_\alpha\subseteq E$.  It follows that $ \vert x_\alpha - x \vert \leqslant w_\alpha\leqslant y_\alpha$ and $w_\alpha\downarrow 0$ in $E$. Thus $(x_\alpha )_\alpha$ is order convergence to $x$ in $E$.

\item If $B$ is a band in $F$, it is clear that $B$ is a band in $E$. Now assume that $B$ is a band in $E$.
First we prove that $I$ is an ideal in $F$.  Let $x\in F$ and $y\in I$ such that $0<\vert x\vert<\vert y\vert$. Since $E$ is order dense in $F$, there is a $z\in E$ such that $0<z\leqslant x^+\leqslant \vert x\vert$, which follows that $z\in B$. Put
$\sup\{z\in B:~0<z\leqslant x^+\}=w$. By $F$-Dedekind completeness of $E$, we have $w\in E$, and so $w\in B$. If $w<x^+$, then $0<x^+-w$. Since $E$ is order dense in $F$,   there is $v\in E$ such that $0< v< x^+-w$, and so $0<w+v<x^+$.  It follows that $w+v\in B$, which is impossible. Thus $w=x^+$ belong to $B$. In the same way $x^-\in B$, and so $x\in B$. This shows that $B$ is an ideal in $F$.  Now since $E$ is  $F$-Dedekind complete, by using Lemma \ref{l:2.5},    $B$ is order closed in $F$ and proof follows.
\item By using Proposition 8 from \cite{alpay2009characterizations}, proof follows.
\item by using Corollary 10 from \cite{alpay2009characterizations}, proof follows.
\end{enumerate}
\end{proof}



\section{$b$-order continuous operators}
Let $E$ and $F$ be vector lattices. 
 An operator $T: E\rightarrow F$ is called $b$-order bounded operator if it maps $b$-order bounded subset of $E$ into $b$-order bounded  subset of $F$. 
 The collection of $b$-order bounded operators will be denoted by:
\begin{equation*}
L_{b^\sim}\left(E,F\right):=\lbrace T\in L\left(E,F\right): T \text{\ is $b$-order bounded operator}\rbrace.
\end{equation*}
 An order bounded operator between two vector lattices is $b$-order bounded, but as Example 2.4, \cite{alpay2003property}, the converse, in general, not holds.

\begin{proposition}
Let $E$ and $F$ be vector lattices.  $T\in L_{b^\sim}\left(E,F\right)$ if and only if   $\vert T\vert\in L_{b^\sim}\left(E,F\right)$
\end{proposition}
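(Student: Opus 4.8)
The plan is to run the argument through the canonical lattice embeddings $Q_E\colon E\to E^{\sim\sim}$ and $Q_F\colon F\to F^{\sim\sim}$, exploiting that each is a lattice homomorphism, and to reduce the statement to the Dedekind complete space $F^{\sim\sim}$, where the modulus is guaranteed to exist. The two analytic tools are the pointwise estimate $|Tx|\le |T|(|x|)$ and the Riesz--Kantorovich formula $|T|(x)=\sup\{|Ty|:|y|\le x\}$ for $x\ge 0$. I first record the easy observation that $b$-order boundedness is insensitive to moduli: since $Q_E(|x|)=|Q_E x|$, a set $A$ is $b$-order bounded if and only if $|A|=\{|x|:x\in A\}$ is, and, more importantly, its pooled solid hull $B_A=\{y\in E:|y|\le |x|\text{ for some }x\in A\}$ is again $b$-order bounded, because $Q_E(|y|)\le Q_E(|x|)\le\varphi$ whenever $Q_E(A)\subseteq[-\varphi,\varphi]$. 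This last fact is the hinge of the whole proof.

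For the direction $|T|\in L_{b^\sim}(E,F)\Rightarrow T\in L_{b^\sim}(E,F)$, which is routine, I would take a $b$-order bounded $A$, note that $|A|$ is $b$-order bounded, hence so is $|T|(|A|)$, say $Q_F(|T|(|A|))\subseteq[0,\psi]$; applying the order-preserving lattice homomorphism $Q_F$ to $|Tx|\le |T|(|x|)$ gives $|Q_F(Tx)|=Q_F(|Tx|)\le\psi$ for every $x\in A$, so $Q_F(T(A))$ is order bounded and $T$ is $b$-order bounded. No completeness or regularity hypothesis is needed here.

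For the converse $T\in L_{b^\sim}(E,F)\Rightarrow |T|\in L_{b^\sim}(E,F)$, which carries the real content, I would pass to $S:=Q_F\circ T\colon E\to F^{\sim\sim}$. Since order intervals are $b$-order bounded, $S$ is order bounded, and because $F^{\sim\sim}$ is Dedekind complete its modulus $|S|$ exists with $|S|(x)=\sup\{|Sy|:|y|\le x\}$ for $x\ge 0$. Given a $b$-order bounded $A$, which we may take inside $E_+$, I would invoke the hinge above: $B_A$ is $b$-order bounded, so $T(B_A)$, equivalently $S(B_A)$, is order bounded in $F^{\sim\sim}$, say $S(B_A)\subseteq[-\psi,\psi]$. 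Then for each $x\in A$ and each $y$ with $|y|\le x$ one has $y\in B_A$ and hence $|Sy|\le\psi$; taking the supremum yields $|S|(x)\le\psi$, so $|S|(A)\subseteq[0,\psi]$ is order bounded. Finally I would identify $Q_F\circ|T|$ with $|S|$, which is where the main obstacle lies: the identity $Q_F(|T|x)=\sup\{Q_F(|Ty|):|y|\le x\}=|S|(x)$ requires interchanging $Q_F$ with the Riesz--Kantorovich supremum, and this is legitimate only because $Q_F(F)$ sits as an order dense---hence regular---sublattice of $F^{\sim\sim}$. Pinning down this regularity (and, relatedly, making sure the modulus $|T|$ in the statement is the one computed in $F^{\sim\sim}$, so that it genuinely exists) is the step I expect to demand the most care; the combinatorial heart of the argument, the $b$-order boundedness of the pooled hull $B_A$, is by contrast completely elementary.
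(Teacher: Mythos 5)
Your core argument is the same as the paper's: the paper also passes to the solid hull of a $b$-order bounded (increasing) net in $E$ (your set $B_A$, your ``hinge''), uses $b$-order boundedness of $T$ to get an upper bound $\psi$ for the image in the Dedekind complete space $F^{\sim\sim}$, and then invokes the Riesz--Kantorovich formula there to conclude $|T|(x)\le\psi$; the paper dismisses the other direction as an ``easy calculation,'' which you prove correctly. The trouble is the one step where you go beyond the paper, namely the identification of $Q_F\circ|T|$ with $|S|=|Q_F\circ T|$. You justify it by asserting that $Q_F(F)$ is an order dense, hence regular, sublattice of $F^{\sim\sim}$, and that is false in general. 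Take $F=C[0,1]$, so that $F^\sim$ is the space of regular Borel measures: the element $u\in F^{\sim\sim}$ defined by $u(\mu)=\mu(\{1/2\})$ satisfies $0<u\le Q_F(\mathbf{1})$, yet no $f\in C[0,1]$ has $0<Q_F(f)\le u$ (such an $f$ would be positive with $\int f\,dm=0$ against Lebesgue measure $m$, hence $f=0$). Regularity of $Q_F$ fails as well: $t^n\downarrow 0$ in $C[0,1]$ while $\delta_1(t^n)=1$ for all $n$, so $Q_F(t^n)\not\downarrow 0$ in $F^{\sim\sim}$; indeed $Q_F$ is regular precisely when every order bounded functional on $F$ is order continuous.

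This matters because without regularity you only have the inequality $Q_F(|T|x)\ \ge\ \sup\{Q_F(|Ty|):|y|\le x\}=|S|(x)$ (any upper bound of $\{T,-T\}$ in $L(E,F)$ dominates each $|Ty|$, hence its image under the positive map $Q_F$ dominates the supremum in $F^{\sim\sim}$), and this inequality points the wrong way: your bound $|S|(A)\subseteq[0,\psi]$ then gives no control on $Q_F(|T|(A))$, which is what $b$-order boundedness of $|T|$ requires. So what your argument actually establishes is that the modulus $|S|$ computed in $F^{\sim\sim}$ carries $b$-order bounded sets to order bounded subsets of $F^{\sim\sim}$ --- which, to be fair, is also all the paper itself establishes: the paper never distinguishes $|T|$ from the $F^{\sim\sim}$-valued modulus (note the statement carries no hypothesis guaranteeing that an $F$-valued modulus even exists), so it silently proves the proposition under that reading. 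If $|T|$ is genuinely meant as the modulus in $L(E,F)$, then the identification you flag needs either additional hypotheses ($F$ Dedekind complete, or $F^\sim=F_n^\sim$, or validity of the pointwise Riesz--Kantorovich formula in $F$ together with preservation of those suprema by $Q_F$) or a different argument, and neither your proposal nor the paper supplies one.
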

\begin{proof}
Assume that $T\in L_{b^\sim}\left(E,F\right)$, we shows that
 $ \vert T\vert\in L_{b^\sim}\left(E,F\right)$.
Let $(x_\alpha )_\alpha$ be a net in $E^+$ with $x_\alpha\uparrow x^{\prime\prime}$ for some $x^{\prime\prime}\in E^{\sim\sim}$. Let $A$ be the solid hull of $(x_\alpha )_\alpha$  in $E$. Since $(x_\alpha )_\alpha$ is order bounded in $E^{\sim\sim}$, $A$ is $b$-order bounded in $E$. Then $T(A)$ is $b$-order bounded in $F$. Since $F^{\sim\sim}$ is Dedekind complete, $\sup T(A)$ exists in  $F^{\sim\sim}$. 
Let $y\in E$ with $\vert y\vert\leq x_\alpha$  for fix $\alpha\in I$. Then $y\in A$ and $T(y)\leq \sup T(A)$. It  follows that $\vert T\vert(x_\alpha)\leq\sup T(A)$. By Dedekind completeness of $F^{\sim\sim}$,   $\sup_\alpha \vert T\vert(x_\alpha)$ exists in $F^{\sim\sim}$. This shows that $ \vert T\vert\in L_{b^\sim}\left(E,F\right)$. The converse by easy calculation follows. 
\end{proof}
The above proposition shows that $L_{b^\sim}\left(E,F\right)$ is a lattice, and so is a vector lattice. So it is easy to shows  that  $L_{b^\sim}\left(E,F\right)$ is an ideal in $L\left(E,F\right)$.
\begin{definition} 
An operator $T:E \rightarrow F $ between two  vector lattices is said to be $ b $-order continuous, if $ x_\alpha \xrightarrow{bo} 0 $ in $E$ implies $ Tx_\alpha\xrightarrow{bo} 0 $ in $F$. 
\end{definition}
\begin{proposition}
If $T$ is $ b $-order continuous operator between two  vector lattices $ E $ and $ F $. Then $ T $ is $ b $-order bounded.
\end{proposition}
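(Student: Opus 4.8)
The plan is to argue by contradiction and to reduce the statement to order intervals of the biduals. Since a subset $A\subseteq E$ is $b$-order bounded precisely when $Q_E(A)$ lies in some interval of $E^{\sim\sim}$, it suffices to prove that for every $0\le u\in E^{\sim\sim}$ the image $T(I_u)$ of $I_u:=\{x\in E:|x|\le u\}$ (with the order taken in $E^{\sim\sim}$ via $Q_E$) is order bounded in $F^{\sim\sim}$. I would assume this fails for some $u$, so that the set $\{|Tx|:x\in I_u\}$ has no upper bound in $F^{\sim\sim}$, and then manufacture from this unboundedness a $bo$-null net whose image is not $bo$-null, contradicting $b$-order continuity.

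The engine of the construction is the Archimedean property of the biduals. If $(x_n)\subseteq I_u$, then $z_n:=\tfrac1n x_n$ obeys $|z_n|\le \tfrac1n u$ in $E^{\sim\sim}$; since $E^{\sim\sim}$ is Archimedean we have $\tfrac1n u\downarrow 0$, whence $z_n\xrightarrow{bo}0$ in $E$. By $b$-order continuity $Tz_n\xrightarrow{bo}0$ in $F$, so there is a sequence $(g_n)$ in $F^{\sim\sim}$ with $g_n\downarrow 0$ and $|Tz_n|\le g_n$; because $g_n\le g_1$, this forces $\{\tfrac1n|Tx_n|\}=\{|Tz_n|\}$ to be order bounded in $F^{\sim\sim}$. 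Consequently, to reach a contradiction it is enough to choose $x_n\in I_u$ so that the damped family $\{\tfrac1n|Tx_n|:n\in\mathbb{N}\}$ is \emph{not} order bounded in $F^{\sim\sim}$, and producing such a selection is exactly what the assumed failure of order boundedness ought to supply.

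The heart of the argument, and the step I expect to be the main obstacle, is precisely this selection: extracting from the unbounded set $\{|Tx|:x\in I_u\}$ a sequence that survives the damping by $1/n$ and stays unbounded in $F^{\sim\sim}$. When the target bidual carries a numerical gauge (as in the scalar case $F^{\sim\sim}=\mathbb{R}$, where one simply takes $|Tx_n|\ge n^2$) this is routine; in a general Dedekind complete $F^{\sim\sim}$ one must exploit its order completeness to balance the shrinking scalars against the growth of the images, and guard against pathological families all of whose countable subfamilies are order bounded (which would force a net, rather than a sequence, into the construction). A convenient way to finesse this is to test through $F^\sim$: for each $0\le\varphi\in F^\sim$ the composition $\varphi\circ T$ sends $bo$-null, hence order null, sequences in $E$ to $0$, so $\varphi\circ T$ is a $\sigma$-order continuous and therefore order bounded functional, giving that $\{\varphi(Tx):x\in I_u\}$ is bounded. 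The remaining task is then to upgrade this pointwise boundedness over $F^\sim$ to genuine order boundedness of $T(I_u)$ in $F^{\sim\sim}$, and it is at this upgrade that the order completeness of the bidual must do the real work.
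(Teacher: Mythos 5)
Your proposal does not close, and the gap is not merely technical: both of the routes you sketch break down at exactly the points you yourself flag. The first route (extracting a sequence from the unbounded set $T(I_u)$ that survives damping by $1/n$) is unavailable in a general bidual, as you note: without a numerical gauge there is no way to select $x_n$ so that $\{\tfrac1n|Tx_n|\}$ stays unbounded, and unbounded sets can have all countable subsets order bounded. The second route is worse, because the bridging principle it needs is simply false: pointwise boundedness on $F^\sim$ does \emph{not} imply order boundedness in $F^{\sim\sim}$, and Dedekind completeness of $F^{\sim\sim}$ cannot repair this, since completeness produces suprema of sets that already have an upper bound, whereas the issue here is the existence of any upper bound at all. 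Concretely, take $F=\ell^1$, so that $F^\sim=\ell^\infty$ and $F^{\sim\sim}=(\ell^\infty)^\sim$. The standard unit vectors give the set $\{Q_F(e_n):n\in\mathbb{N}\}\subseteq F^{\sim\sim}$, which satisfies $|Q_F(e_n)(x)|=|x_n|\le\|x\|_\infty$ for every $x\in\ell^\infty$, hence is pointwise bounded on $F^\sim$; yet it has no upper bound in $F^{\sim\sim}$: the $Q_F(e_n)$ are pairwise disjoint positive elements, so any upper bound $g$ would dominate $\sum_{n=1}^N Q_F(e_n)$ and therefore satisfy $g(\mathbf{1})\ge N$ for every $N$, which is impossible. (There is also a smaller slip: you invoke order boundedness of the functional $\varphi\circ T$ to bound $\varphi(T(I_u))$, but $I_u$ is only $b$-order bounded in $E$, not order bounded in $E$, so order boundedness of $\varphi\circ T$ says nothing about $I_u$; the scalar damping argument has to be run on $I_u$ directly --- which, in fairness, does work.)

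The idea you are missing, and which the paper uses, is to turn the entire trace of the order interval into a \emph{single} $bo$-null net, so that no selection of sequences and no duality testing is ever needed. Fix $x''\in (E^{\sim\sim})^+$ and set $A=[0,x'']\cap E$. Index by $\Lambda=\{\beta\in E^{\sim\sim}: 0\le\beta\le x''\}$ directed by the \emph{reverse} order ($\alpha\preceq\beta$ iff $\alpha\ge\beta$; this is directed because $[0,x'']$ is closed under $\wedge$), and define $x_\alpha=\alpha$ if $\alpha\in A$ and $x_\alpha=0$ otherwise. Then $|x_\alpha|\le y_\alpha:=\alpha$ and $y_\alpha\downarrow_b 0$ along $\Lambda$, so $x_\alpha\xrightarrow{bo}0$ by construction. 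By $b$-order continuity there is a net $(z_\alpha)\subseteq F^{\sim\sim}$ with $|Tx_\alpha|\le z_\alpha\downarrow 0$; since $x''$ is the $\preceq$-least index, $z_\alpha\le z_{x''}$ for all $\alpha$, hence $|Ta|\le z_{x''}$ for every $a\in A$. Thus $T(A)$ is order bounded in $F^{\sim\sim}$ by the single element $z_{x''}$. This is the order-theoretic analogue of the classical proof that order continuous operators are order bounded, and it sidesteps precisely the two obstacles on which your construction founders.
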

\begin{proof}
Suppose that $T:E \rightarrow F $ is a $ b $-order continuous operator. Let $ A=\left[ 0,x''\right] \cap E $ for some $ x''\in E^{\sim\sim} $. Let $ \Lambda=\lbrace \beta: 0\leq \beta \leq x'' \rbrace $ and we write $ \alpha \preceq \beta $ if and only if $ \alpha \geq \beta $. We consider a net $ \left( x_\alpha \right)_{\alpha \in \Lambda} $ as follows
\begin{equation*}
x_\alpha =
 \left\{ \begin{array}{ll}
\alpha & \alpha \in A  ;\\
0 &  \alpha \notin A  
\end{array} \right.
\end{equation*}
Therefore $ x_\alpha \xrightarrow{bo} 0 $, since if we set $ y_\alpha=\alpha $ then $ y_\alpha \downarrow_{b} 0$ and $ \vert x_\alpha \vert \leq y_\alpha $. By the $ b $-order continuity of $ T $, there exists a net $ \left(z_\alpha\right) $ of $ F^{\sim\sim}$ with the same index $ \Lambda $ such that $ \vert Tx_\alpha \vert \leq z_\alpha \downarrow_{b} 0$. Consequently, if $ \alpha\in\Lambda $ then we have $ \vert Tx_\alpha \vert \leq z_\alpha \leq z_{x''} $ that $ z_{x''} \in F^{\sim\sim} $, and this show that $ T $ is a $ b $-order bounded operator.
\end{proof}

As above proposition, the class of $b$-order continuous operators is a subspace of 
$ L_{b^\sim}\left(E,F\right)$ and will be denoted by $L_{n^\sim}\left(E,F\right)$, that is 
\begin{equation*}
L_{n^\sim}\left(E,F\right):=\lbrace T\in L_{b^\sim}\left(E,F\right): T \text{\ is $b$-order continuous}\rbrace,
\end{equation*}


\begin{proposition}
Let $E$ and $F$ be vector lattices, $T$ and $S$ are operators from $E$ into $F$ and $0\leq S \leq T $. If $T\in L_{n^\sim}\left(E,F\right)$, then   $S\in L_{n^\sim}\left(E,F\right)$.
\end{proposition}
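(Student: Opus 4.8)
The plan is to reduce the $b$-order continuity of $S$ to that of $T$ by combining the pointwise domination $0\le S\le T$ with the elementary lattice inequalities available for positive operators. First I would fix an arbitrary net $(x_\alpha)\subseteq E$ with $x_\alpha\xrightarrow{bo}0$ and, invoking the definition, select a net $(y_\alpha)\subseteq E^{\sim\sim}$ on the same index set with $y_\alpha\downarrow 0$ in $E^{\sim\sim}$ and $|x_\alpha|\le y_\alpha$ for every $\alpha$. The immediate observation is that $|x_\alpha|\xrightarrow{bo}0$ as well, witnessed by the very same dominating net $(y_\alpha)$, since $\bigl||x_\alpha|\bigr|=|x_\alpha|\le y_\alpha$.

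Next I would record the decisive chain of inequalities. Because $S\ge 0$, for each $\alpha$ one has $|Sx_\alpha|\le S|x_\alpha|$, and because $S\le T$ while $|x_\alpha|\ge 0$, one has $S|x_\alpha|\le T|x_\alpha|$; hence $|Sx_\alpha|\le T|x_\alpha|$ for all $\alpha$. This transfers the problem entirely to controlling the net $\bigl(T|x_\alpha|\bigr)$, which is exactly where the hypothesis on $T$ enters.

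Now I would invoke $T\in L_{n^\sim}(E,F)$. Applying the $b$-order continuity of $T$ to the net $|x_\alpha|\xrightarrow{bo}0$ yields a net $(w_\alpha)\subseteq F^{\sim\sim}$, on the same index set, with $w_\alpha\downarrow 0$ in $F^{\sim\sim}$ and $\bigl|T|x_\alpha|\bigr|\le w_\alpha$. Since $T\ge 0$ and $|x_\alpha|\ge 0$ give $T|x_\alpha|\ge 0$, this reads $T|x_\alpha|\le w_\alpha$, and combined with the previous step we obtain $|Sx_\alpha|\le w_\alpha\downarrow 0$ in $F^{\sim\sim}$, which is precisely $Sx_\alpha\xrightarrow{bo}0$ in $F$. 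To conclude membership in $L_{n^\sim}(E,F)$ rather than mere continuity, I would finally note that $S$ is $b$-order bounded: either because $L_{b^\sim}(E,F)$ is an ideal in $L(E,F)$ and $0\le S\le T\in L_{b^\sim}(E,F)$, or directly from the earlier proposition that every $b$-order continuous operator is $b$-order bounded. I do not anticipate a substantive obstacle here; the proof is essentially a domination argument, and the only point requiring care is the bookkeeping with index sets, making sure every dominating net produced shares the index set of $(x_\alpha)$ so that the definition of $bo$-convergence applies verbatim.
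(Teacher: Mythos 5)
Your proposal is correct and follows essentially the same route as the paper: apply the $b$-order continuity of $T$ to the net $(|x_\alpha|)$ and use the domination chain $|Sx_\alpha|\le S|x_\alpha|\le T|x_\alpha|\le w_\alpha\downarrow 0$ in $F^{\sim\sim}$. Your write-up is in fact slightly more careful than the paper's, since you explicitly verify $|x_\alpha|\xrightarrow{bo}0$ and address the $b$-order boundedness of $S$ required for membership in $L_{n^\sim}(E,F)$, points the paper leaves implicit.
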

\begin{proof}
Let $\left(x_\alpha\right) $ be net in $E$ that  $ x_\alpha \xrightarrow{bo} 0 $. Since $T$ is $b$-order continuous, there exists $y_\alpha \in F^{\sim\sim}$ such that $\vert T\vert x_\alpha\vert\vert=T\vert x_\alpha\vert \leq y_\alpha\downarrow_{b} 0 $.
 On the other hand, $\vert S\left(x_\alpha\right)\vert \leq S\vert x_\alpha\vert \leq T\vert x_\alpha\vert \leq y_\alpha \downarrow_{b} 0$ for every $x_\alpha$ in $E$, and this yields that $S$ is $b$-order continuous.  
\end{proof}

\begin{lemma}\label{l:3.5}
 Let $E$ and $F$ be two vector lattices with Dedekin complete. Then 
$0<T\in L_{n^\sim}\left(E,F\right)$ if and only if for each net $\left(x_\alpha\right) $ in $E$, $x_\alpha\downarrow_b 0$ implies 
$Tx_\alpha\downarrow_b 0$.
\end{lemma}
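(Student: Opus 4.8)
The plan is to prove the two implications separately, in both cases exploiting that $E^{\sim\sim}$ and $F^{\sim\sim}$, being order duals, are Dedekind complete, and that a positive operator carries decreasing nets to decreasing nets.

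For the forward implication, assume $0<T\in L_{n^\sim}(E,F)$ and let $x_\alpha\downarrow_b 0$. First I would note that $x_\alpha\downarrow_b 0$ forces $x_\alpha\geq 0$ together with $x_\alpha\downarrow 0$ in $E^{\sim\sim}$, so choosing the dominating net to be $(x_\alpha)$ itself shows $x_\alpha\xrightarrow{bo}0$; $b$-order continuity then yields $Tx_\alpha\xrightarrow{bo}0$, i.e. there is a net $z_\alpha\downarrow 0$ in $F^{\sim\sim}$ with $Tx_\alpha=|Tx_\alpha|\leq z_\alpha$. Since $T\geq 0$ and $(x_\alpha)$ decreases, $(Tx_\alpha)$ decreases, and Dedekind completeness of $F^{\sim\sim}$ gives $w:=\inf_\alpha Tx_\alpha\in F^{\sim\sim}$ with $0\leq w\leq z_\alpha$ for every $\alpha$; hence $0\le w\le\inf_\alpha z_\alpha=0$, so $Tx_\alpha\downarrow_b 0$.

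For the converse, assume the decreasing-net condition and let $x_\alpha\xrightarrow{bo}0$, with $|x_\alpha|\leq y_\alpha$ and $y_\alpha\downarrow 0$ in $E^{\sim\sim}$. The key construction is to pass from this $E^{\sim\sim}$-valued dominating net to a genuine decreasing net \emph{inside} $E$: for each $\alpha$ the tail $\{|x_\beta|:\beta\geq\alpha\}$ is bounded above in $E^{\sim\sim}$ by $y_\alpha$ (monotonicity of $(y_\beta)$), hence is $b$-order bounded, and I would set $w_\alpha:=\sup_{\beta\geq\alpha}|x_\beta|$, the supremum computed in $E^{\sim\sim}$. Using that $E$ is ($b$-)Dedekind complete, exactly as in the proof of Theorem \ref{2.8}(1), this supremum lies in $E$; moreover $|x_\alpha|\leq w_\alpha\leq y_\alpha$, the net $(w_\alpha)$ decreases, and $w_\alpha\leq y_\alpha\downarrow 0$ forces $\inf_\alpha w_\alpha=0$ in $E^{\sim\sim}$, so that $w_\alpha\downarrow_b 0$ with $(w_\alpha)\subseteq E$. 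Applying the hypothesis gives $Tw_\alpha\downarrow_b 0$, and positivity together with $|x_\alpha|\leq w_\alpha$ yields $|Tx_\alpha|\leq T|x_\alpha|\leq Tw_\alpha$; thus $(Tw_\alpha)$ is a dominating net witnessing $Tx_\alpha\xrightarrow{bo}0$, and $T\in L_{n^\sim}(E,F)$.

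I expect the main obstacle to be precisely this construction in the converse: one must manufacture a decreasing net in $E$ (not merely in $E^{\sim\sim}$) that dominates $(|x_\alpha|)$ and still decreases to $0$ in the bidual. Realizing the tail-suprema inside $E$, and recognizing that the $E^{\sim\sim}$-supremum is automatically bounded by $y_\alpha$, is exactly where the ($b$-)Dedekind completeness of $E$ is indispensable; in its absence (for instance in $c_0$, as in Example \ref{E0}) the tails fail to be order bounded in $E$ and the relevant supremum escapes $E$, so the naive transcription of the classical order-continuity argument breaks down.
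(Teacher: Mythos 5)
Your proposal is correct and follows essentially the same route as the paper's proof: the forward direction dominates $Tx_\alpha=|Tx_\alpha|$ by the net $z_\alpha\downarrow 0$ witnessing $Tx_\alpha\xrightarrow{bo}0$ (you merely add the explicit squeeze $0\le\inf_\alpha Tx_\alpha\le\inf_\alpha z_\alpha=0$, which the paper asserts without comment), and the converse uses exactly the paper's construction $w_\alpha=\bigvee_{\beta\ge\alpha}|x_\beta|$, followed by $|Tx_\alpha|\le T|x_\alpha|\le Tw_\alpha$.

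The one point of divergence is worth recording: you invoke $b$-Dedekind completeness of $E$ to place the tail suprema $w_\alpha$ inside $E$, whereas the lemma as stated assumes only Dedekind completeness. This is not a defect of your argument but a gap in the paper's own proof, which applies $T$ to $w_\alpha$ without ever justifying $w_\alpha\in E$; under mere Dedekind completeness the tail $\{|x_\beta|:\beta\ge\alpha\}$ is only $b$-order bounded, not order bounded in $E$, so its supremum can escape $E$ (precisely the $c_0$ phenomenon you cite). Your version, read as a proof under the hypothesis that $E$ is $b$-Dedekind complete (equivalently, Dedekind complete with the $b$-property), is the corrected form of the paper's argument.
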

\begin{proof}
Assume that $0<T\in L_{n^\sim}\left(E,F\right)$ and $x_\alpha\downarrow_b 0$. It follows that $ x_\alpha \xrightarrow{bo} 0 $, and so $ Tx_\alpha \xrightarrow{bo} 0 $. Then there is a net $\left(z_\alpha\right) $ in $F^{\sim\sim}$ such that 
 $Tx_\alpha= \vert Tx_\alpha\vert\leq z_\alpha\downarrow 0$, which follows that $Tx_\alpha\downarrow_b 0$.\\
 Conversely, Let $\left(x_\alpha\right)\subseteq E$  such that  $ x_\alpha \xrightarrow{bo} 0 $ in $E$. Then there is a net $\left(y_\alpha\right) $ in $E^{\sim\sim}$ such that 
 $ \vert x_\alpha\vert\leq y_\alpha\downarrow 0$ in $E^{\sim\sim}$. Set $w_\alpha =\bigvee_{\beta\geq \alpha}\vert x_\beta\vert<z_\alpha$. Then we have $w_\alpha\downarrow_b 0$, and so $Tw_\alpha\downarrow_b 0$. Since $\vert Tx_\alpha\vert\leq T \vert  x_\alpha\vert\leq Tw_\alpha$,  $T x_\alpha \xrightarrow{bo} 0 $ in $F$, and proof follows.
\end{proof}

As an application of  Lemma \ref{l:3.5}, we have the following corollary, in which the techniques of this corollary has been similar argument like as Theorem 1.56  \cite{1} and we omit its proof.

\begin{corollary}
Let $E$ and $F$ be two vector lattices with Dedekind complete. Then the following assertions are equivalent.
\begin{enumerate}
\item $ T\in L_{n^\sim}\left(E,F\right)$.
\item $x_\alpha\downarrow_b 0$ implies  $Tx_\alpha\downarrow_b 0$.
\item $x_\alpha\downarrow_b 0$ implies  $\inf_b\vert Tx_\alpha\vert= 0$.
\item $ T^-, ~T^+~\text{and}~\vert T\vert$ belong to $ L_{n^\sim}\left(E,F\right)$.
\end{enumerate}
\end{corollary}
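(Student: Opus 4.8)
The plan is to follow the proof of Theorem~1.56 in \cite{1}, systematically replacing order convergence and order infima by their $b$-analogues $\xrightarrow{bo}$ and $\downarrow_b$, using that the biduals $E^{\sim\sim}$ and $F^{\sim\sim}$ are Dedekind complete, and using Lemma~\ref{l:3.5} as the device that turns the monotone test into full $b$-order continuity for \emph{positive} operators. Two of the implications cost almost nothing. For $(4)\Rightarrow(1)$, I would use that $L_{n^\sim}(E,F)$ is a vector subspace and write $T=T^+-T^-$, so that membership of $T^+,T^-$ yields $T\in L_{n^\sim}(E,F)$; equivalently, if $|T|$ is $b$-order continuous then $|Tx_\alpha|\le|T|\,|x_\alpha|$ together with $|x_\alpha|\xrightarrow{bo}0$ gives $Tx_\alpha\xrightarrow{bo}0$. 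For $(1)\Rightarrow(3)$, if $x_\alpha\downarrow_b 0$ then $x_\alpha\xrightarrow{bo}0$, hence $Tx_\alpha\xrightarrow{bo}0$, i.e.\ there is a net $w_\alpha\downarrow_b 0$ in $F^{\sim\sim}$ with $|Tx_\alpha|\le w_\alpha$, and therefore $0\le\inf_b|Tx_\alpha|\le\inf_b w_\alpha=0$. Finally $(2)\Rightarrow(3)$ is immediate, since $Tx_\alpha\downarrow_b 0$ forces $Tx_\alpha\ge 0$ and hence $|Tx_\alpha|=Tx_\alpha\downarrow_b 0$.

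The substantial step is $(1)\Rightarrow(4)$, and it is enough to show that $T^+$ is $b$-order continuous: the case of $T^-$ is symmetric (apply the result to $-T$, noting $(-T)^+=T^-$), and $|T|=T^++T^-$ is then a sum of two members of the subspace $L_{n^\sim}(E,F)$. As $T^+\ge 0$, Lemma~\ref{l:3.5} reduces the task to proving that $x_\alpha\downarrow_b 0$ implies $T^+x_\alpha\downarrow_b 0$. Since $T^+x_\alpha$ decreases and $F^{\sim\sim}$ is Dedekind complete, $T^+x_\alpha\downarrow_b y$ for some $y\ge 0$, and I must show $y=0$. The key tool is the Riesz--Kantorovich formula $T^+x=\sup\{Tz:0\le z\le x\}$, valid because $F$ is Dedekind complete. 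Fix $\alpha$ and a vector $z$ with $0\le z\le x_\alpha$. For $\beta\ge\alpha$ the infinite distributive law in $E^{\sim\sim}$ gives $\inf_\beta(z\wedge x_\beta)=z\wedge\inf_\beta x_\beta=0$, so $z\wedge x_\beta\downarrow_b 0$ and hence $T(z\wedge x_\beta)\xrightarrow{bo}0$. Writing $Tz=T(z\wedge x_\beta)+T((z-x_\beta)^+)$ and using $0\le(z-x_\beta)^+\le x_\alpha-x_\beta$ with the positivity and linearity of $T^+$, I get
\begin{equation*}
Tz\le T(z\wedge x_\beta)+T^+(x_\alpha-x_\beta)=T(z\wedge x_\beta)+T^+x_\alpha-T^+x_\beta\le T(z\wedge x_\beta)+T^+x_\alpha-y.
\end{equation*}
Picking $s_\beta\downarrow_b 0$ with $T(z\wedge x_\beta)\le s_\beta$ and taking the infimum over $\beta\ge\alpha$ leaves $Tz\le T^+x_\alpha-y$; as this holds for every admissible $z$, passing to the supremum (which is $T^+x_\alpha$) gives $T^+x_\alpha\le T^+x_\alpha-y$, so $y\le 0$ and $y=0$.

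I expect this last step to be the main obstacle, for two reasons. First, the identity $\inf_\beta(z\wedge x_\beta)=0$ must be read in the Dedekind complete space $E^{\sim\sim}$, where the infinite distributive law is available, rather than in $E$; this is exactly what makes $z\wedge x_\beta\downarrow_b 0$ legitimate. Second, the Riesz--Kantorovich supremum is a priori a supremum in $F$, and I must know it is still the supremum in $F^{\sim\sim}$; this holds because $F$ sits as an order dense sublattice of $F^{\sim\sim}$ and order dense sublattices preserve those suprema that already exist, but it is the point that requires care. Once $(1)\Rightarrow(4)$ is in place the cycle closes easily: $(3)\Rightarrow(4)$ runs through the very same computation, with $(3)$ supplying $\inf_b|T(z\wedge x_\beta)|=0$ in place of $T(z\wedge x_\beta)\xrightarrow{bo}0$, and the monotone statement $(2)$ follows from Lemma~\ref{l:3.5} applied to the positive parts $T^+,T^-$. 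I would also remark that, strictly, $(2)$ is meaningful only for the positive parts, since for a non-positive $T$ the image net $Tx_\alpha$ need not be decreasing; this matches the role of Lemma~\ref{l:3.5}.
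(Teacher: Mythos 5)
Your route is the one the paper intends (the paper in fact omits the proof, saying only that it is a ``similar argument'' to Theorem 1.56 of \cite{1} combined with Lemma \ref{l:3.5}), but your execution of the crucial implication $(1)\Rightarrow(4)$ has a genuine gap, located exactly at the point you flagged. You need the Riesz--Kantorovich supremum $T^+x_\alpha=\sup\{Tz:0\le z\le x_\alpha\}$, a supremum formed in $F$, to remain the supremum in $F^{\sim\sim}$, and you justify this by asserting that $F$ is an order dense (hence regular) sublattice of $F^{\sim\sim}$. That assertion is false in general. Take $F=\ell^\infty$ (Dedekind complete, so admissible here) and $x_n=\sum_{k>n}e_k$, so that $x_n\downarrow 0$ in $\ell^\infty$. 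Since the infimum of a decreasing net of positive elements of $F^{\sim\sim}=(F^\sim)^\sim$ is computed pointwise on positive functionals, the element $\psi=\inf_n Q_F(x_n)$ of $F^{\sim\sim}$ satisfies $\psi(\phi)=\inf_n\phi(x_n)=1$ for every Banach limit $\phi$, so $\psi>0$; on the other hand $\psi$ vanishes on $\ell^1=(\ell^\infty)^\sim_n$, so any $y\in\ell^\infty$ with $0\le Q_F(y)\le\psi$ satisfies $f(y)=0$ for all $0\le f\in\ell^1$ and hence $y=0$. Thus no nonzero element of $F$ lies below $\psi$: $F$ is not order dense in $F^{\sim\sim}$, and the embedding $Q_F$ does not preserve suprema and infima. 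This is not a removable technicality; it is precisely the phenomenon the whole paper is built on ($x_\alpha\downarrow 0$ in a vector lattice does not imply $x_\alpha\downarrow_b 0$), and it occurs exactly when some positive order bounded functional fails to be order continuous. Consequently, in your final display $T^+x_\alpha-y$ is an upper bound of $\{Tz\}$ only in $F^{\sim\sim}$, and you cannot conclude $T^+x_\alpha\le T^+x_\alpha-y$.

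The step can be repaired, but by discarding the bidual Riesz--Kantorovich maneuver rather than patching it. The same pointwise-evaluation fact that produced the counterexample gives, for decreasing nets of positive elements, the characterization: $x_\alpha\downarrow_b 0$ if and only if $x_\alpha\downarrow$ and $f(x_\alpha)\downarrow 0$ for every $0\le f\in E^\sim$. Granting, as statement (4) implicitly does, that $T^+$ exists as a positive operator from $E$ into $F$, one then gets the needed monotone property directly: for every $0\le g\in F^\sim$ one has $g\circ T^+\in(E^\sim)^+$, hence $g(T^+x_\alpha)=(g\circ T^+)(x_\alpha)\downarrow 0$, i.e.\ $T^+x_\alpha\downarrow_b 0$; Lemma \ref{l:3.5} then yields $T^+\in L_{n^\sim}(E,F)$, and likewise for $T^-$ and $|T|=T^++T^-$. (Equivalently, one can invoke the order continuity of adjoint operators and dominate $Q_F(|Tx_\alpha|)$ by $|T|^{\sim\sim}y_\alpha$.) Note the telling consequence: this argument never uses hypothesis (1), so every order bounded operator into the Dedekind complete lattice $F$ satisfies (2) and (4). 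The equivalences are therefore true but far less delicate than their classical counterparts in Theorem 1.56 of \cite{1}, and the classical proof you tried to transplant --- and which the paper claims transplants routinely --- is the wrong tool for the $b$-setting.
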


\begin{proposition}
Let $E$ and $F$ be both vector lattices. Then we have the following assertions.
\begin{enumerate}
\item If $F$ is a $b$-Dedekind complete, then $L_b(E,F)=L_{b^\sim}(E,F)$.
\item If $E$ and $F$ are both  $b$-Dedekind complete, then  $ L_{n^\sim}\left(E,F\right)= L_{n}\left(E,F\right)$.
\item   $ L_{n^\sim}\left(E,F\right)$ is a band  of  $L_{b^\sim}\left(E,F\right)$.
\end{enumerate}
\end{proposition}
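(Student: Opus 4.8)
The plan is to handle the three parts separately, with (1) and (2) following quickly from the machinery already in place and (3) carrying the real weight.

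For (1), the inclusion $L_b(E,F)\subseteq L_{b^\sim}(E,F)$ is automatic, since every order bounded operator is $b$-order bounded. For the reverse inclusion I would take $T\in L_{b^\sim}(E,F)$ and an order bounded $A\subseteq E$; as $A$ is then $b$-order bounded, $T(A)$ is $b$-order bounded in $F$, and the $b$-property of the $b$-Dedekind complete space $F$ forces $T(A)$ to be order bounded in $F$, so $T\in L_b(E,F)$. For (2), the point is that $b$-Dedekind completeness collapses $bo$-convergence onto $o$-convergence: every $o$-convergent net is $bo$-convergent (the remark before Example \ref{E0}), while Theorem \ref{2.8}(1), applied with $F=E^{\sim\sim}$, supplies the converse in a $b$-Dedekind complete lattice. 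Hence in both $E$ and $F$ the relations $x_\alpha\xrightarrow{bo}0$ and $x_\alpha\xrightarrow{o}0$ agree, so the defining conditions for $L_{n^\sim}(E,F)$ and $L_n(E,F)$ coincide and the two classes are equal.

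For (3) I would prove that $L_{n^\sim}(E,F)$ is an order closed ideal of the vector lattice $L_{b^\sim}(E,F)$. Being an ideal comes from the domination Proposition together with the modulus characterization (item (4) of the Corollary): if $|S|\le|T|$ with $T\in L_{n^\sim}(E,F)$, then $|T|\in L_{n^\sim}(E,F)$, whence $0\le|S|\le|T|$ gives $|S|\in L_{n^\sim}(E,F)$, and then $S^+,S^-\le|S|$ put $S$ in $L_{n^\sim}(E,F)$. The substantive step is order closedness. Given $0\le T_\lambda\uparrow T$ in $L_{b^\sim}(E,F)$ with every $T_\lambda\in L_{n^\sim}(E,F)$, I would test $T$ on a net $x_\beta\downarrow_b0$ and use, for $\beta\ge\gamma$,
\[
0\le Tx_\beta\le (T-T_\lambda)x_\gamma+T_\lambda x_\beta .
\]
Each $T_\lambda\ge0$ is $b$-order continuous, so $T_\lambda x_\beta$ is a decreasing net which is $bo$-null, hence $T_\lambda x_\beta\downarrow_b0$ and $\inf_\beta T_\lambda x_\beta=0$ in $F^{\sim\sim}$. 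Taking infima over $\beta$ in the Dedekind complete lattice $F^{\sim\sim}$ yields $\inf_\beta Tx_\beta\le (T-T_\lambda)x_\gamma$ for all $\lambda,\gamma$, and then letting $\lambda$ vary (using $T-T_\lambda\downarrow0$ in $L_{b^\sim}(E,F)$) should give $\inf_\beta Tx_\beta=0$, i.e. $Tx_\beta\downarrow_b0$.

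The main obstacle is the equality $\inf_\lambda(T-T_\lambda)x_\gamma=0$ in $F^{\sim\sim}$: this is precisely the order continuity of the evaluation $S\mapsto Sx_\gamma$ on $L_{b^\sim}(E,F)$, and it is where the order structure of $L_{b^\sim}(E,F)$ must be pinned down. My plan is to reach it by embedding $L_{b^\sim}(E,F)$ into $L_b(E,F^{\sim\sim})$ through $T\mapsto Q_F\circ T$; since $F^{\sim\sim}$ is Dedekind complete, the Riesz--Kantorovich formulas compute the lattice operations and make evaluation order continuous in $L_b(E,F^{\sim\sim})$, and transporting the resulting infimum back along $Q_F$ gives the vanishing in $F^{\sim\sim}$. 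Checking that this embedding respects the relevant infima is the delicate part I expect to spend the most effort on. A final loose end is the reduction of an arbitrary $bo$-null net to the monotone case $x_\beta\downarrow_b0$ treated above, which I would carry out as in the converse direction of Lemma \ref{l:3.5}, passing to the tails $\bigvee_{\gamma\ge\beta}|x_\gamma|$ formed in $E^{\sim\sim}$.
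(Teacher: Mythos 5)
Your parts (1) and (2) are correct and follow the paper's own proofs essentially verbatim: (1) is the observation that $b$-Dedekind completeness of $F$ gives the $b$-property, so $b$-order bounded images are order bounded; (2) applies Theorem \ref{2.8}(1) once in $E$ and once in $F$ to identify $bo$-convergence with $o$-convergence on both sides.

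Part (3) is where the trouble is. The paper offers no argument at all here (it only asserts that the proof of Theorem 1.57 of \cite{1} carries over), so writing out the adaptation is the right instinct, and you correctly isolate the crux; but the device you propose for the crux fails. The embedding $J\colon T\mapsto Q_F\circ T$ of $L_{b^\sim}(E,F)$ into $L_b(E,F^{\sim\sim})$ is positive and injective, yet it does \emph{not} preserve suprema of upward directed families: infima of decreasing nets of positive elements of $F^{\sim\sim}=L_b(F^\sim,\mathbb{R})$ are computed pointwise on $(F^\sim)^+$, so $\inf_\alpha Q_F z_\alpha=0$ forces $\inf_\alpha f(z_\alpha)=0$ for every $0\le f\in F^\sim$; hence $Q_F$ respects such suprema only when every positive order bounded functional on $F$ is order continuous. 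Concretely, take $E=\mathbb{R}$, $F=c$ (convergent sequences), $T_n(1)=y_n=\sum_{k=1}^{n}e_k$ and $T(1)=\mathbf{1}$. Then $y_n\uparrow\mathbf{1}$ in $c$, so $0\le T_n\uparrow T$ in $L_{b^\sim}(\mathbb{R},c)\cong c$, and every operator in this space (in particular each $T_n$) is $b$-order continuous; but the limit functional $\varphi(x)=\lim_k x_k$ satisfies $\varphi(\mathbf{1}-y_n)=1$ for all $n$, so for every scalar $t>0$ one gets $\inf_n Q_c\bigl((T-T_n)t\bigr)=t\,\inf_n Q_c(\mathbf{1}-y_n)\neq 0$ in $c^{\sim\sim}$. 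This is exactly the equality $\inf_\lambda (T-T_\lambda)x_\gamma=0$ your argument requires, so the step is not merely delicate — it is false in general (in this toy example the conclusion of (3) is unharmed, since $L_{n^\sim}(\mathbb{R},c)=L_{b^\sim}(\mathbb{R},c)$, but your route to it collapses). There is a second, smaller gap: the tails $\bigvee_{\gamma\ge\beta}|x_\gamma|$ you form to reduce to monotone nets live in $E^{\sim\sim}$, and $T$ cannot be applied to them unless $E$ is $b$-Dedekind complete, an assumption part (3) does not grant. The underlying difficulty is that the classical proof of Theorem 1.57 rests on the Riesz--Kantorovich fact that directed suprema in $L_b(E,F)$, with $F$ Dedekind complete, are computed pointwise; no analogue of this is available in $L_{b^\sim}(E,F)$, and the paper's one-line citation conceals precisely the obstruction your example-sensitive step runs into.
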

\begin{proof}
\begin{enumerate}
\item Obviously that $L_b(E,F)\subseteq L_{b^\sim}(E,F)$. Now we prove  $ L_{b^\sim}(E,F)\subseteq L_b(E,F)$. Let $A$ be an order bounded subset of $E$ and $T\in  L_{b^\sim}(E,F)$. Then $T(A)$ is an $b$-order bounded subset of $F$. Since $F$ be $b$-Dedekind complete, follows that $\sup_b T(A)$ exists in $E$. It follows that $T(A)$ is order bounded in $E$, and so proof follows.
\item Assume that $T\in L_{n}\left(E,F\right)$. Let $\left(x_\alpha\right) $ be a net in $E$ that  $ x_\alpha \xrightarrow{bo} 0 $. Since $E$ is $b$-Dedekind complete, by using Theorem \ref{2.8},  $ x_\alpha \xrightarrow{o} 0 $. By assumption, we have  $ Tx_\alpha \xrightarrow{o} 0 $, which follows that $ Tx_\alpha \xrightarrow{bo} 0 $, and so $T\in L_{n^\sim}\left(E,F\right)$.\\
 Now let $T\in L_{n}\left(E,F\right)$ and $\left(x_\alpha\right)\subseteq E$ such that  $ x_\alpha \xrightarrow{o} 0 $. It follows that $ x_\alpha \xrightarrow{bo} 0 $, and so $ Tx_\alpha \xrightarrow{bo} 0 $. By Dedekind completeness of $F$ and another using Theorem  \ref{2.8}, we have $ Tx_\alpha \xrightarrow{bo} 0 $, which follows that     $ L_{n}\left(E,F\right)\subseteq  L_{n^\sim}\left(E,F\right)$, and proof down.
\item Proof has the similar argument from Theorem 1.57 \cite{1}.
\end{enumerate}
\end{proof}


\begin{thebibliography}{1}

\bibitem{1} Y.  A.  Abramovich and C. D. Aliprantis, \textit{Locally Solid vector lattices with Application to Economics}, Mathematical Surveys, vol. 105, American Mathematical Society, Providence, RI, 2003.

\bibitem{aliprantis2006positive}
C. D. Aliprantis and O. Burkinshaw, \emph{Positive operators}, vol.
  119, Springer Science \& Business Media, 2006.

\bibitem{alpay2003property}
{\c{S}} . Alpay, B. Altin, and C. Tonyali, \emph{On property (b) of vector lattices},
  Positivity \textbf{7} (2003), no.~1, 135--139.

\bibitem{alpay2006note}
{\c{S}}. Alpay, B. Altin, and C. Tonyali, \emph{A note on vector lattices with
  property-b}, Czechoslovak Mathematical Journal \textbf{56} (2006), no.~2,
  765--772.

\bibitem{alpay2011riesz}
{\c{S}} . Alpay and B.  Altin, \emph{On vector lattices with b-property and
  strongly order bounded operators}, Rendiconti del Circolo Matematico di
  Palermo \textbf{60} (2011), no.~1, 1--12.
\bibitem{4a}  B. Altin,  \textit{On b-weakly compact operators on Banach lattices}, Taiwan. J. Math.  \textbf{11},  (2007), 143-150.
\bibitem{alpay2009characterizations}
{\c{S}}.  Alpay and Z.  Ercan, \emph{Characterizations of Riesz spaces with
  b-property}, Positivity \textbf{13} (2009), no.~1, 21--30.


\end{thebibliography}
\end{document}